\newsavebox{\@brx}
\newcommand{\llangle}[1][]{\savebox{\@brx}{\(\m@th{#1\langle}\)}%
  \mathopen{\copy\@brx\mkern2mu\kern-0.9\wd\@brx\usebox{\@brx}}}
\newcommand{\rrangle}[1][]{\savebox{\@brx}{\(\m@th{#1\rangle}\)}%
  \mathclose{\copy\@brx\mkern2mu\kern-0.9\wd\@brx\usebox{\@brx}}}
\setlist[enumerate,1]{label={(\roman*)}}
\newcommand*{\Theorem}{Theorem}
\newcommand*{\Proposition}{Proposition}
\newcommand*{\Lemma}{Lemma}
\newcommand*{\Corollary}{Corollary}
\newcommand*{\Definition}{Definition}
\newcommand*{\Question}{Question}
\newcommand*{\Remark}{Remark}
\newcommand*{\Notation}{Notation}
\newcommand*{\Figure}{Figure}
\theoremstyle{plain}
\newtheorem{theorem}{\Theorem}
\newtheorem{proposition}[theorem]{\Proposition}
\newtheorem{corollary}[theorem]{\Corollary}
\theoremstyle{definition}
\theoremstyle{remark}
\newtheorem{remark}[theorem]{\Remark}
\newenvironment{claim}[1]{\smallskip\par\noindent\underline{Claim\space#1:}\space}{\smallskip}
\newenvironment{claimproof}[1]{\smallskip\par\noindent\underline{Proof of Claim\space#1:}\space}{\leavevmode\unskip\penalty9999 \hbox{}\nobreak\hfill\quad\hbox{$\blacksquare$}\smallskip}
\newcommand{\fin}[1][k]{\ensuremath{\textup{FIN}_{\mathnormal{#1}}}\xspace}
\newcommand{\supp}{\textup{supp}}
\title{A lower bound on Gowers' \fin theorem}
\author{Alexander P.\ Kreuzer}
\address{Department of Mathematics \\
Faculty of Science \\
National University of Singapore \\
Block S17, 10 Lower Kent Ridge Road \\
Singapore 119076 
}
\email{matkaps@nus.edu.sg}
\urladdr{\url{http://aleph.one/matkaps/}}
\date{\today\ \thistime}
\begin{document}
\maketitle

Gowers' \fin theorem, also called Gowers' pigeonhole principle or Gowers' theorem, is a Ramsey-type theorem. It first occurred in the study of Banach space theory \cite{wG92}, and
is a natural generalization of Hindman's theorem.
In this short note, we will show that Gowers' \fin theorem does not follow from \lp{ACA_0}.

\section{Hindman's theorem}

Hindman's theorem is the following statement. As the name suggest it was established by Neil Hindman, see \cite{nH74}.

\begin{theorem}[Hindman's theorem, \lp{HT}]
  If the natural numbers are colored with finitely many colors then there is an infinite set $S\subseteq \Nat$ such that the non-repeating, finite sums of $S$
  \[
  \textup{FS}(S) := \left\{\, \sum_{i\in I}  s_i\sizeMid  I \in \mathcal{P}_\textnormal{fin}(\Nat) \setminus \{ \emptyset \}\, \right\}
  \quad\text{where $(s_i)$ is the enumeration of $S$}
  \]
  are colored with only one color.
\end{theorem}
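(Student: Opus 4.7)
My plan is to follow the Galvin--Glazer argument via idempotent ultrafilters in the Stone--\v{C}ech compactification $\beta\Nat$. First, I would extend addition from $\Nat$ to $\beta\Nat$ by
\[
A \in p + q \;\iff\; \{\, n \in \Nat : A - n \in q \,\} \in p,
\]
and check that $(\beta\Nat, +)$ becomes a compact Hausdorff right-topological semigroup (associativity descends from $\Nat$; continuity of right translation follows because basic clopen sets pull back to basic clopens). The Ellis--Numakura lemma---proved by using Zorn to extract a minimal closed subsemigroup, inside which every element turns out to be idempotent---then yields some $p$ with $p + p = p$.

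Given a coloring $c : \Nat \to k$, exactly one color class $A := c^{-1}(\{j\})$ lies in $p$. I would set $A^* := \{\, n \in A : A - n \in p \,\}$ and unpack $p + p = p$ to deduce both $A^* \in p$ and, for every $n \in A^*$, $A^* - n \in p$. With those two facts in hand, the sequence $s_1 < s_2 < \cdots$ is built recursively: pick any $s_1 \in A^*$, and at stage $m+1$ observe that
\[
B_m := A^* \cap \bigcap_{\emptyset \neq I \subseteq \{1,\dots,m\}} \bigl(\, A^* - \textstyle\sum_{i \in I} s_i \,\bigr)
\]
is a finite intersection of members of $p$, hence itself in $p$ and in particular infinite; any $s_{m+1} \in B_m$ above $s_m$ preserves the invariant that every non-empty finite sum of $s_1, \dots, s_{m+1}$ lies in $A^* \subseteq A$.

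The main obstacle is not the combinatorial recursion, which is a line or two, but the infrastructure: constructing $(\beta\Nat, +)$ and extracting an idempotent are strongly non-constructive, relying on Zorn's lemma and on the existence of nonprincipal ultrafilters. A purely combinatorial alternative (Baumgartner's proof) exists but trades the ultrafilter machinery for intricate bookkeeping; either way, quantifying this non-effectivity is presumably what the remainder of the note does for Gowers' \fin theorem.
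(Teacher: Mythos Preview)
Your Galvin--Glazer sketch is correct and standard: the extension of addition to $\beta\Nat$, the Ellis--Numakura idempotent, the passage to $A^*$, and the recursive construction of $(s_i)$ are all accurate, and your invariant at stage $m+1$ is the right one.

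However, there is nothing to compare against. The paper does not prove Hindman's theorem; it merely states it as background and cites Hindman's original paper. In the context of this note---which is about reverse-mathematical strength---it is worth observing that your argument, while elegant, is wildly non-optimal axiomatically: the existence of a nonprincipal ultrafilter already exceeds $\mathsf{ZF}$, let alone $\mathsf{ACA_0^+}$, which the paper cites (via Blass--Hirst--Simpson) as an upper bound for $\mathsf{HT}$. The combinatorial proofs (Hindman's original, or Baumgartner's streamlined version) are the ones that formalize in $\mathsf{ACA_0^+}$, and those are the relevant benchmark for the lower-bound discussion that follows. Your closing paragraph shows you are aware of this tension, which is exactly the point of the paper.
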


In the context of reverse mathematics \lp{HT} was first investigated by Blass, Hirst, Simpsons in \cite{BHS87}. 
There it was shown that it follows from  \ls{ACA_0^+},
that is \ls{ACA_0} plus the statement that for all $X$ the $\omega$-jump $X^{(\omega)}$ exists. The best known lower bound is \ls{ACA_0}, see also \cite{BHS87}.
It is one of the big open questions of reverse mathematics what is the exact strength of \lp{HT} and whether it is equivalent to \ls{ACA_0}.
There has been some partial process on this question. However no definite answer could be given. See \cite{jH04,aB05,hT12} and \cite[Section 2.3]{aM11}.

As already said Gower's pigeonhole principle is a generalization of \lp{HT}. Below we will see that it is not provable in \ls{ACA_0}.

To understand the statement of Gower's pigeonhole principle we will first look at the following finite unions variant of Hindman's theorem. 
It is not to difficult to see that it s equivalent (relative to \ls{RCA_0}) to \lp{HT}, see \cite{BHS87}.
\begin{theorem}[Hindman's theorem, finite unions variant]
  If the finite subsets of the natural numbers $\mathcal{P}_\textnormal{fin}(\Nat)$ are colored with finitely many colors there exists a infinite set $S\subseteq \mathcal{P}_\textnormal{fin}(\Nat)$ consisting of pairwise disjoint sets and   such that the non-empty finite unions of $S$
  \begin{equation}\label{eq:nu}
  \textup{NU}(S) := \{ s_1 \cup \dots \cup s_n \mid n\in\Nat\setminus \{0\}, s_i\in S, \max s_i < \min s_{i+1} \}
  \end{equation}
  are colored with only one color.
\end{theorem}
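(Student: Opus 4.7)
The plan is to derive the finite unions variant from \lp{HT} via the natural binary-encoding bijection $\phi\colon \Nat \to \mathcal{P}_\textnormal{fin}(\Nat)$ sending $n$ to the set of positions of $1$'s in its binary expansion. The key observation, provable in \ls{RCA_0}, is that whenever $n_1, \dots, n_k \in \Nat$ satisfy $\max \phi(n_i) < \min \phi(n_{i+1})$, the addition $n_1 + \dots + n_k$ proceeds without carries and hence $\phi(n_1 + \dots + n_k) = \phi(n_1) \cup \dots \cup \phi(n_k)$.

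Given a finite coloring $c$ of $\mathcal{P}_\textnormal{fin}(\Nat)$, push it forward to $c'(n) := c(\phi(n))$ on $\Nat$ and apply \lp{HT} to obtain an infinite $H = \{h_0 < h_1 < \dots\}$ with $\textup{FS}(H)$ monochromatic for $c'$. The main obstacle is that a generic sum of elements of $H$ may involve carries, so the binary support of the sum need not be the union of the individual supports. To get around this, thin $H$ to a sequence $H' = \{h'_0, h'_1, \dots\} \subseteq \textup{FS}(H)$ whose binary supports are block-separated, i.e., $\max \phi(h'_j) < \min \phi(h'_{j+1})$ for all $j$; any such $H'$ still has $\textup{FS}(H') \subseteq \textup{FS}(H)$ and so is $c'$-monochromatic.

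The thinning proceeds inductively. Having defined $h'_0, \dots, h'_n$ using elements of $H$ with index at most $N$, set $m := \max \phi(h'_n)$ and examine the partial sums $T_j := h_{N+1} + \dots + h_{N+j}$ reduced modulo $2^{m+1}$. A bounded pigeonhole argument (harmless in \ls{RCA_0}) yields $j < j'$ with $T_j \equiv T_{j'} \pmod{2^{m+1}}$, and then $h'_{n+1} := h_{N+j+1} + \dots + h_{N+j'}$ is divisible by $2^{m+1}$ and lies in $\textup{FS}(H)$, so its binary support sits above $m$ as required.

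Setting $A_i := \phi(h'_i)$ now yields an infinite block-disjoint sequence in $\mathcal{P}_\textnormal{fin}(\Nat)$, and for any nonempty finite $I$ the opening observation gives $\bigcup_{i \in I} A_i = \phi\bigl(\sum_{i \in I} h'_i\bigr)$ with $\sum_{i \in I} h'_i \in \textup{FS}(H)$, so $c\bigl(\bigcup_{i \in I} A_i\bigr)$ equals the single $c'$-color of $\textup{FS}(H)$. Hence $\textup{NU}(\{A_i\})$ is $c$-monochromatic. The converse implication, needed for the equivalence to \lp{HT} over \ls{RCA_0} mentioned in the text, is the easier direction: pull a coloring of $\Nat$ back via $\phi^{-1}$ to $\mathcal{P}_\textnormal{fin}(\Nat)$, apply the finite unions variant to get a block-disjoint family $(A_i)$, and observe by the same no-carries identity that $\{\phi^{-1}(A_i)\}$ witnesses \lp{HT} for the original coloring.
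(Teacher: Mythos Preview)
Your argument is correct and is the standard reduction; the paper itself does not prove this statement but merely records it and cites \cite{BHS87} for the equivalence with \lp{HT}, so there is no proof in the paper to compare against. The binary-encoding bijection together with the pigeonhole thinning to force block-separated supports is exactly the classical route, and your handling of both directions is sound and formalizes in \ls{RCA_0} as claimed.
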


\section{Gowers' \fin theorem}
Before we can formulate Gowers' \fin theorem we have to introduce some notations. The following definitions will be made in \ls{RCA_0}.

Let $k\in \Nat$ and let $p\colon \Nat \longto [0,k]$.
We call the set the  
\[
\supp(p):= \{ n\in \Nat \mid p(n) \neq 0 \}
\]
the \emph{support} of $p$. 
The space \fin we be the following.
\begin{align*}
\fin &:= \left\{\, p\in {[0,k]}^\Nat \sizeMid \abs{\supp(p)} < \omega, \Exists{n} p(n)=k \,\right\} \\
  & \phantom{:}= \left\{\, p\in {[0,k]}^{<\Nat} \sizeMid \Exists{n< \lth(p)} (p(n)=k) \,\right\} 
\end{align*}
This space will play the role of $\mathcal{P}_\textnormal{fin}(\Nat)$ in Hindman's theorem. For $k=2$ it is actually isomorphic to $\mathcal{P}_\textnormal{fin}(\Nat)$.

On \fin we define the following order
\[
p < q \quad\text{if{f}}\quad \max \supp(p) < \min \supp(q)
\]
and the following partial addition for comparable elements
\[
p + q \quad \text{for }p<q
\]
which will be equal to the usual pointwise addition (if $p<q$). 

On \fin we will make use of the following, so called ``tetris'' operation $T$
\[
T\colon \fin \longto \fin[k-1] \qquad T(p)(n) = p(n) \dotminus 1
.\]

A \emph{block sequence} $B$ is an infinite increasing sequence $B=(b_n)_{n\in\Nat}$ in \fin. The \emph{combinatorial space} $\langle B \rangle$ generated by $B$ is the smallest subsets of \fin containing $B$ and closed under addition and tetris, i.e.,
\[
\langle B \rangle := \left\{ \sum_{n\in \Nat} T^{k-f(n)}(b_n) \mid f\in \fin \right\}
.\]
(Note that the above sum is finite since the support of $f$ is finite.)

We can now formulate Gowers' \fin theorem.

\begin{theorem}[Gowers' \fin theorem, \lp{FIN_{<\infty}}]
  For any $k\in\Nat$ and any finite coloring of $\fin$ there exists a combinatorial subspace colored by only one color.
\end{theorem}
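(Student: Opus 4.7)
The plan is to imitate the classical idempotent-ultrafilter proof of Hindman's theorem, organised across the $k$ layers linked by the tetris operation. For each $j \le k$ the partial semigroup $(\fin[j], +)$ extends to a compact left-topological semigroup on the space of ultrafilters on $\fin[j]$ which concentrate on every set of the form $\{p : \min \supp(p) > N\}$; the product $\mathcal{U} + \mathcal{V}$ is defined by declaring $A \in \mathcal{U} + \mathcal{V}$ iff $\{p : \{q > p : p+q \in A\} \in \mathcal{V}\} \in \mathcal{U}$, and the tetris map lifts to a continuous semigroup homomorphism $\widehat{T} \colon \beta\fin[j] \to \beta\fin[j-1]$.

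Using Ellis's lemma iteratively I would produce a compatible tower of idempotents $\mathcal{U}_1,\dots,\mathcal{U}_k$, with $\mathcal{U}_j$ idempotent on $\fin[j]$ and $\widehat{T}(\mathcal{U}_j) = \mathcal{U}_{j-1}$. At each step the preimage $\widehat{T}^{-1}(\{\mathcal{U}_{j-1}\})$ is a non-empty closed subsemigroup of $\beta\fin[j]$, hence contains an idempotent.

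Given a finite colouring $c \colon \fin \to r$, choose the unique colour class $A \in \mathcal{U}_k$. I would then construct the block sequence $(b_n)$ with $\langle (b_n)\rangle \subseteq A$ inductively: after $b_0 < \dots < b_{n-1}$ have been chosen, idempotency together with the compatibility $\widehat{T}(\mathcal{U}_j) = \mathcal{U}_{j-1}$ yields a single large set in $\mathcal{U}_k$ encoding, simultaneously for every way $f \in \fin$ could complete a partial sum from $\{b_0,\dots,b_{n-1}\}$, the condition that adding $T^{k-f(n)}(b_n)$ keeps the resulting sum in $A$. Picking $b_n$ above $b_{n-1}$ from this set completes the step.

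The main obstacle I expect is precisely this simultaneity: a fixed $b_n$ must work for every possible value $f(n) \in \{0,1,\dots,k\}$, so membership of the final sum in $A$ must be translated into a condition in each $\mathcal{U}_{f(n)}$. The compatible tower is engineered for this, since $A \in \mathcal{U}_k$ forces appropriate shadows $A_j \in \mathcal{U}_j$ for every $j \le k$, and the intersection of the finitely many shadows remains in $\mathcal{U}_k$ by the filter axioms. Verifying that $+$ and $\widehat{T}$ interact correctly---associativity of the extension and continuity of the homomorphism---is standard but needs care because $+$ on $\fin[j]$ is only partially defined.
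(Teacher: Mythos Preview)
The paper does not prove this theorem. Gowers' \fin theorem is stated as a known result taken from \cite{wG92}; the paper's own contribution is the lower bound of \prettyref{thm:low} and its corollary, and the concluding section explicitly remarks that every known proof of \lp{FIN_{<\infty}} goes through ultrafilters. So there is no in-paper argument for you to compare against, and your proposal is precisely the standard proof the paper is alluding to.

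As an outline of that standard proof your sketch is sound. The lifting step works exactly as you say: once $\mathcal{U}_{j-1}$ is idempotent and $\widehat{T}$ is a continuous semigroup homomorphism between the semigroups of cofinite ultrafilters, the fibre $\widehat{T}^{-1}(\{\mathcal{U}_{j-1}\})$ is a nonempty closed subsemigroup (closure under $+$ uses idempotency of the target), and Ellis produces an idempotent inside it. Your diagnosis of the block-sequence extraction as the delicate point is accurate; the usual device is to refine the colour class $A=A_k\in\mathcal{U}_k$ to ``star'' sets $A_j^{\ast}\in\mathcal{U}_j$ satisfying $A_j^{\ast}\subseteq\{p:\{q>p:p+q\in A_j^{\ast}\}\in\mathcal{U}_j\}$ and $T[A_j^{\ast}]\subseteq A_{j-1}^{\ast}$, after which each $b_n$ can be chosen in $A_k^{\ast}$ above a finite intersection of shifts. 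The partiality of $+$ is harmless once one restricts to ultrafilters concentrating on every tail $\{p:\min\supp(p)>N\}$, since then the sets $\{q:q>p\}$ are always large.
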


We will denote full version of of Gowers' \fin theorem by \lp{FIN_{<\infty}} and the restriction to a particular $k$ by \lp{FIN_\mathnormal{k}}. 
It is clear that for $k=2$ this theorem is the same as Hindman's theorem since a combinatorial subspace in \fin[2] is just as the set given in \eqref{eq:nu}. So in other words
\[
\ls{RCA_0} \vdash \lp{HT} \IFF \lp{FIN_\mathnormal{2}}
.\]
Moreover, it is clear that $\lp{FIN_\mathnormal{k}} \IMPL \lp{FIN_\mathnormal{l}}$ if $k>l$ since \fin[l] can be embedded into \fin via the following mapping $i(p)(n) = p(n) + k-l$.

\begin{remark}[\ls{RCA_0}]
  A combinatorial  subspace $\langle B \rangle \subseteq \fin$ is isomorphic to \fin via the isomorphism $\Theta_B(f) :=  \sum_{n\in \Nat} T^{k-f(n)}(b_n)$.

  Thus, \lp{FIN_\mathnormal{k}} remains true if one colors only a combinatorial subspace instead of \fin.
  This variant is equivalent to \lp{FIN_\mathnormal{k}}.
\end{remark}

\section{A lower bound on \lp{FIN_\mathnormal{k}}}

\begin{theorem}\label{thm:low}
  There exists a recursive coloring of \fin[k+1] with $2^k$ many colors, such that each monochromatic combinatorial subspace computes $\emptyset^{(k)}$.
\end{theorem}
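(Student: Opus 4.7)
The strategy is induction on $k$, with each inductive step contributing one extra bit of color and encoding one additional jump. The base case $k = 0$ is vacuous: the unique $1$-coloring of $\fin[1]$ suffices, since $\emptyset^{(0)} = \emptyset$ is recursive.

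For the inductive step, assume we have a recursive coloring $c_k \colon \fin[k+1] \to 2^k$ whose every monochromatic combinatorial subspace computes $\emptyset^{(k)}$. Define $c_{k+1} \colon \fin[k+2] \to 2^{k+1}$ by
\[
c_{k+1}(p) := \bigl(c_k(T(p)),\ \beta_k(p)\bigr),
\]
where $\beta_k \colon \fin[k+2] \to \{0,1\}$ is an auxiliary recursive bit, defined below. The tetris operation sends any block sequence $B$ in $\fin[k+2]$ to a block sequence $TB$ in $\fin[k+1]$, and a direct calculation, using that tetris commutes with sums over disjoint supports, gives $T(\langle B \rangle) = \langle TB \rangle$. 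Consequently, any $c_{k+1}$-monochromatic $\langle B \rangle$ descends to a $c_k$-monochromatic $\langle TB \rangle$, and the inductive hypothesis lets us recursively compute $\emptyset^{(k)}$ from $B$.

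It remains to design $\beta_k$ so that its constancy on $\langle B \rangle$, together with the $\emptyset^{(k)}$ oracle just obtained, yields $\emptyset^{(k+1)}$. Fix a recursive relation $R_k$ with
\[
n \in \emptyset^{(k+1)} \iff \exists s_0\, \forall s_1\, \exists s_2\, \dotsm\, R_k(n, s_0, \ldots, s_k),
\]
and from each $p \in \fin[k+2]$ extract a nested sequence $n(p) < s_0(p) < \dots < s_k(p)$ of $k+2$ positions by locating the first occurrence in $p$ of each of its top $k+2$ values. Set $\beta_k(p) := R_k(n(p), s_0(p), \ldots, s_k(p))$; this is recursive. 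On a $c_{k+1}$-monochromatic $\langle B \rangle$, the freedom to vary the defining parameter $f \in \fin[k+2]$ of an element of $\langle B \rangle$ lets one push each extracted stage $s_i$ cofinally while holding the earlier ones fixed, and constancy of $\beta_k$ across these variations, resolved against $\emptyset^{(k)}$, pins down $\emptyset^{(k+1)}$.

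The main obstacle is matching the combinatorial freedom in $\langle B \rangle$ — the ability to independently push $k+1$ extracted stages cofinally — to the $k+1$ alternating quantifiers defining $\emptyset^{(k+1)}$, so that monochromaticity of $\beta_k$ really forces the correct answer at each quantifier level. The case $k = 0$ is essentially the Blass-Hirst-Simpson lower bound for Hindman's theorem, and the iteration via tetris supplies one additional jump at each inductive step.
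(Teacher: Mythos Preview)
Your inductive scaffold via tetris is sound: $T$ commutes with disjointly-supported sums, the identity $T\langle B\rangle=\langle TB\rangle$ holds, and so a $c_{k+1}$-monochromatic $\langle B\rangle$ does descend to a $c_k$-monochromatic $\langle TB\rangle$. The failure is entirely in $\beta_k$.

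A minor point first: the extraction is ill-defined, since $p\in\fin[k+2]$ need not take all of the values $1,\dots,k+2$, and even when it does the first occurrences need not appear in the order you postulate. The real problem, however, is structural. If $\beta_k$ is constant with value $c$ on $\langle B\rangle$, you learn one bit $c$, the same bit regardless of which $n$ you wish to test. Already at the first nontrivial step (the passage from $k=0$ to $k=1$, so $\beta_0(p)=R_0(n(p),s_0(p))$ with $n\in\emptyset'\iff\exists s_0\,R_0(n,s_0)$), constancy $c=0$ says only that $R_0(n(p),s_0(p))=0$ for those pairs that actually arise from $\langle B\rangle$; even if $s_0(p)$ can be pushed cofinally with $n(p)$ held fixed, nothing prevents $R_0(n(p),s_0)=1$ for some $s_0$ outside that cofinal set, and in any case the achievable values of $n(p)$ form a sparse set determined by $B$, so an arbitrary $n$ cannot be tested at all. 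The Blass--Hirst--Simpson coloring for Hindman's theorem is \emph{not} of the shape you describe: it is the \emph{parity of the number of short gaps} (consecutive level-$1$ positions across which the jump approximation changes), and the decoding works because that count is additive under concatenation, so a constant parity forces every individual gap between blocks to be long --- which is precisely what lets one read off $\emptyset'$ at every input.

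For your inductive step to go through, $\beta_k$ would have to be the short-gap parity relative to $\emptyset^{(k)}$, but that is only $\emptyset^{(k)}$-recursive. The missing ingredient is a recursive surrogate: use the maxima of the lower levels of $p$ as stage indices for an iterated computable approximation $\emptyset^{(k)}_{\mu_k(p),\dots,\mu_1(p)}$, count ``very short gaps'' with respect to it, and then prove a padding lemma saying that inside a monochromatic subspace any $f$ can be extended by a tail $g$ whose level maxima are spaced far enough apart that the approximate counts on $f+g$ agree with the true short-gap counts on $f$. That padding lemma is where the actual work lies, and your proposal does not supply it. The paper carries out exactly this argument (its Claims~1 and~2), for all levels $1,\dots,k$ simultaneously rather than by induction on $k$.
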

Before we will come to the proof we will fix some notation and state a proposition.
We will need computable approximations of the $n$-fold Turing jump. For this we shall write
\[
\emptyset^{(n)}_{s_n,s_{n-1},\dots,s_1}
\]
for 
\[
(\cdots (\emptyset'_{s_1})'_{s_2} \dots )'_{s_n}
\]
(In other words the $s_n$-step approximation of the Turing jump of the $s_{n-1}$-step approximation of the Turing jump \dots\ of the $s_1$-step approximation of the unrelativized Turing jump.)
If the tuple $(s_n,s_{n-1},\dots,s_m)$ is shorter than $n$, we shall take the true Turing jump for the missing indexes.

\begin{proposition}\label{pro:itlim}
  For each $n,m$ there exists a finite sequence $(m_1,\dots,m_n)$ such that
  \[
  \emptyset^{(n)} \cap [0;m] = \emptyset^{(n)}_{m_n,\dots,m_1} \cap [0;m]
  .\]
  Moreover, we may assume that $(m_1,\dots,m_n)$ is such that, taking $m_{n+1}:=m$,
  \begin{equation}\label{eq:lim}
  \emptyset^{(i)} \cap [0;m_{i+1}] = \big(\emptyset^{(i-1)}\big)'_{s} \cap [0;m_{i+1}] \quad\text{for $s>m_i$}
  \end{equation}
  where $i\in [1;n]$.
\end{proposition}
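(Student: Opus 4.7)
The plan is to build the sequence $(m_1,\dots,m_n)$ by downward recursion on $i$, starting from $m_{n+1} := m$. The argument rests on two standard facts about the stage-wise Turing jump: (a) $\emptyset^{(i)}$ is the pointwise limit of $(\emptyset^{(i-1)})'_s$ as $s\to\infty$, so for any finite bound $M$ there is a stage past which $(\emptyset^{(i-1)})'_s \cap [0;M] = \emptyset^{(i)} \cap [0;M]$; and (b) the $s$-step approximation $(X)'_s$ depends only on $X\cap[0;s]$, because its underlying computations have length at most $s$ and hence query the oracle only on inputs bounded by $s$.

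Given $m_{i+1}$, I apply fact (a) to choose $m_i$ so that $(\emptyset^{(i-1)})'_s \cap [0;m_{i+1}] = \emptyset^{(i)} \cap [0;m_{i+1}]$ for all $s > m_i$, and I moreover take $m_i$ large enough that the equality also holds at $s = m_i$ itself (which merely requires increasing $m_i$ by one if necessary). The ``moreover'' clause \eqref{eq:lim} then holds by construction.

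With the tuple in hand, I verify the main equation by a \emph{forward} induction on $i \in [0;n]$ of the stronger statement
$\emptyset^{(i)}_{m_i,\dots,m_1} \cap [0;m_{i+1}] = \emptyset^{(i)} \cap [0;m_{i+1}]$; the case $i=n$ is the proposition. The base case $i=0$ is vacuous. For the step, the induction hypothesis gives that $\emptyset^{(i-1)}_{m_{i-1},\dots,m_1}$ and $\emptyset^{(i-1)}$ agree on $[0;m_i]$, so by fact (b) the outer operation $(\cdot)'_{m_i}$ yields the same value when applied to either set. The choice of $m_i$ via \eqref{eq:lim} then identifies the result with $\emptyset^{(i)} \cap [0;m_{i+1}]$.

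The argument is essentially a synchronization of convergence rates of iterated limits across the levels of the arithmetical hierarchy. The only point that requires care is fact (b), which depends on the standard convention that $(X)'_s$ is defined via computations of length at most $s$; with this convention in place the remainder is a routine bookkeeping exercise.
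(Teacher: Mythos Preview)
Your proof is correct and follows the same approach as the paper: construct $(m_1,\dots,m_n)$ by downward recursion using the limit lemma at each level, from which the main equation follows. The paper's version is terser---it merely asserts that the resulting sequence ``automatically satisfies'' the first display---whereas you make explicit the forward induction and the use-bound fact (b) that underlie this assertion, but this is elaboration rather than a different route.
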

\begin{proof}
  Apply the limit lemma---relative to $\emptyset^{(n-1)}$---we obtain an $m_n$ such that 
  \[
  \emptyset^{(n)} \cap [0; m_{n+1}] = \big(\emptyset^{(n-1)})'_s \quad \text{for all $s>m_n$}
  .\]
  Iterating this process we obtain $(m_1,\dots,m_n)$ satisfying \eqref{eq:lim}. This sequence automatically satisfies the other statement of the proposition.
\end{proof}

For an $f\in \fin[k]$ with $k\ge 2$ we shall write
$\mu_i(f):= \max \{ n \mid f(n) =i\}$ and $\lambda_i(f):= \min \{ n \mid f(n) =i\}$ and $\mu(f):= \max\{ n \mid f(n) \neq 0 \}$ and $\lambda(f) :=\min \{ n \mid f(n) \neq 0\}$. Note that $\mu_i,\lambda_i$ are undefined if $i$ is not in the image of $f$. However $\mu,\lambda$ is by definition of $\fin[k]$ always defined.

\begin{proof}[Proof of \prettyref{thm:low}]
  This proof is inspired by Theorem~2.2 of \cite{BHS87}.

  Let $f\in \bigcup_{k'\le k}\fin[k'+1]$. Fix an $i\ge 1$ and let $(n_0,\dots,n_l)$ be the indexes (in ascending order) where $f(n_j)=i$.
  We call $(n_j,n_{j+1})$ a \emph{short gap$_i$} if
  \[
  \emptyset^{(i)} \cap [0;n_j]\; \neq\; \big(\emptyset^{(i-1)}\big)'_{n_{j+1}} \cap [0;n_j]
  .\]
  We will write $\textsf{SG}_{\mathnormal{i}}(f)$ for the number of short gaps$_{i}$ in $f$. Note that in general $\textsf{SG}_\mathnormal{i}(f)$ is not computable.

  We will now construct computable approximation of short gaps.
  Let $i$, $(n_0,\dots,n_l)$ as above. We call $(n_j,n_{j+1})$ a \emph{very short gap$_i$} if 
  \begin{equation}\label{eq:vsg}
    \emptyset^{(i)}_{\mu_i(f),\mu_{i-1}(f),\dots,\mu_1(f)} \cap [0;n_j]\; \neq\; \emptyset^{(i)}_{\mu_i(f),\mu_{i-1}(f),\dots,\mu_{2}(f),n_{j+1}} \cap [0;n_j].
  \end{equation}
  (We treat $\mu_i(f)$ as if it were $0$ if it is undefined.)
  E.g.~for $i=1$ we call $(n_j,n_{j+1})$ a very short gap$_{1}$ if
  \[
  \emptyset'_{\mu_1(f)} \cap [0;n_j]\; \neq\; \emptyset'_{n_{j+1}}\cap [0;n_j]
  .\]
  Note that $\textsf{VSG}_i(f)$ is computable.

  We color $\fin[k+1]$ with the following coloring.
  \[
  c(f) := \sum_{i=1}^{k} 2^{i-1} \cdot (\textsf{VSG}_i(f) \bmod 2).  
  \]
  By \lp{FIN_{k+1}} there  is a homogeneous combinatorial subspace $B$.

  We will write $\llangle B \rrangle$ for $\bigcup_{i=0}^{k} T^i \langle B \rangle$.

  \begin{claim}{1}
    For each $f\in \llangle B\rrangle$ there exists $g\in \langle B \rangle$ with $f<g$ such that every short gap$_i$ in $f$ is a very short gap$_i$ in $f+g$ and such that between $f$ and $g$ no gap$_i$ is (very) short gap$_i$ and not very short.
  \end{claim}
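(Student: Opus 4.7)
The plan is to push $g$ far enough into $B$ that the parameters $\mu_l(f+g)$ lie well above all thresholds needed for the computable approximations $\emptyset^{(i)}_{\cdot}$ to stabilize to the true jumps $\emptyset^{(i)}$. Once this stabilization is in force, the very short gap condition reduces to the short gap condition on any pair coming from $f$, and any crossing pair is automatically past the level-$i$ stabilization threshold and hence cannot be short.

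More concretely, set $M := \max\supp(f)$ and apply \prettyref{pro:itlim} at level $n = k$ with $m = M$ to obtain thresholds $(m_1, \ldots, m_k)$ witnessing the iterative stabilization \eqref{eq:lim}. Using the isomorphism $\Theta_B$ of $\langle B \rangle$ with \fin, pick $g \in \langle B \rangle$ with $f < g$ such that $\lambda_l(g) > m_l$ for every $l \in [1, k]$. This can be arranged by taking $k+1$ blocks of $B$ past both $f$ and $\max_l m_l$ and combining them with appropriate tetris operators so each value $l \in [1, k+1]$ is realized past its threshold; in particular $\mu_l(f+g) = \mu_l(g) \ge \lambda_l(g) > m_l$ for each $l \in [1, k]$.

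For the first clause, let $(n_j, n_{j+1})$ be a short gap$_i$ in $f$. Its endpoints are at most $M$ and remain consecutive $i$-positions in $f+g$ because $g$ sits entirely above $f$. The stabilization of \prettyref{pro:itlim} applied to the sequence $(\mu_i(f+g), \ldots, \mu_1(f+g))$ forces the left-hand side of the very short gap condition to equal $\emptyset^{(i)} \cap [0; n_j]$, matching the left-hand side of the short gap condition. An analogous iterative application, this time to the modified sequence $(\mu_i(f+g), \ldots, \mu_2(f+g), n_{j+1})$ in which only the innermost entry changes, collapses the outer iterates to true jumps irrespective of the innermost step count, so the right-hand side of the very short gap condition equals $(\emptyset^{(i-1)})'_{n_{j+1}} \cap [0; n_j]$, the right-hand side of the short gap condition. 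Hence the two conditions coincide and the short gap is a very short gap in $f+g$.

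For the second clause, the only crossing gap$_i$ candidate is the pair $(\mu_i(f), \lambda_i(g))$. Because $\lambda_i(g) > m_i$, the moreover clause of \prettyref{pro:itlim} gives $\emptyset^{(i)} \cap [0; \mu_i(f)] = (\emptyset^{(i-1)})'_{\lambda_i(g)} \cap [0; \mu_i(f)]$, so the short gap condition fails; no crossing gap is short at all, which a fortiori yields the required property. The principal obstacle is the delicate bookkeeping in the third paragraph: \prettyref{pro:itlim} controls only one canonical step-count sequence, and one must verify that substituting the small value $n_{j+1}$ at the innermost position still preserves the outer-layer stabilization to $(\emptyset^{(i-1)})'_{n_{j+1}}$. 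This relies on the observation that each outer jump depends on its input only through an initial segment on which the lower-level approximations have already converged to the true jumps, so perturbations at the innermost level propagate transparently through the outer approximations.
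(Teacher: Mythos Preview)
There is a genuine gap in your treatment of the first clause. A single application of \prettyref{pro:itlim} with $m=M$ yields thresholds $(m_1,\dots,m_k)$ that control only the one-step approximation $(\emptyset^{(l-1)})'_s$ with the \emph{true} $(l{-}1)$-st jump as oracle; it says nothing directly about the fully iterated approximation $\emptyset^{(l)}_{s_l,\dots,s_1}$. To pass from the former to the latter you need, for each $l$, that the inner approximation $\emptyset^{(l)}_{\mu_l,\dots,\mu_1}$ agrees with the true $\emptyset^{(l)}$ on all of $[0,\mu_{l+1})$, since the stage-$\mu_{l+1}$ jump may query its oracle anywhere in that interval. Your construction only guarantees agreement on $[0,m_{l+1}]$, and by design every $\mu_{l+1}(f+g)$ exceeds $\max_l m_l \ge m_{l+1}$, so the interval $[0,\mu_{l+1})$ on which agreement is needed is strictly larger than the interval on which it is known. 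Your final paragraph asserts that the outer jump depends on its oracle only through an initial segment on which the lower levels have already converged, but that segment is $[0,\mu_{l+1})$, not $[0,m_{l+1}]$; this is precisely the point at issue, not an observation that resolves it.

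The paper closes this gap by \emph{interleaving} the block choices with the limit-lemma applications rather than fixing thresholds in advance. It picks $g_0<g_1<\cdots<g_k$ in $\langle B\rangle$ one at a time, choosing each $g_{i+1}$ only after $g_i$ is fixed and far enough out that the relevant jumps have stabilized on $[0,\mu(g_i)]$, and then sets $g=\sum_{i} T^i(g_i)$. Because $\mu_{k+1-i}(g)$ lies in $\supp(g_i)$, the stabilization interval at each level contains the step count used at the next outer level, which is exactly the nesting needed for the iterated approximation to collapse to the true jump. A one-shot choice of thresholds before any block is selected cannot produce this nesting, since the stabilization interval required at level $l$ depends on $\mu_{l+1}(g)$, which is unknown until $g$ itself is chosen.
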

  \begin{claimproof}{1}
    Recursively build a sequence $(g_i)_{i \le k}\subseteq \langle B \rangle$ with
    \begin{enumerate}
    \item $g_0 > f$ and $g_{i+1} > g_{i}$,
    \item\label{enum:1:3} $\emptyset^{(k-i')} \cap [0,\mu(g_i)] = (\emptyset^{(k-i'-1)})'_{s}\cap [0,\mu(g_i)]$ \\ for $i' \le i< k$ and $s \ge \lambda(g_{i+1})$.
    \end{enumerate}
    The proof proceeds in the same was as the proof of \prettyref{pro:itlim}.
    Suppose we have chosen $g_{0},\dots,g_{i-1}$, then by the limit lemma there exists an $m$ such that the equation in \ref{enum:1:3} is true for all $s \ge m$. Choose $g_i$ such that $g_i>g_{i-1}$ and such that $\lambda(g_i) > s'$.
    Setting
    \[
    m_{k-i} := \mu_{k+1}(g_i) \qquad\text{for } i < k
    \]
    we recovers a sequence $(m_1,\dots,m_k)$ as in \prettyref{pro:itlim}.
    From this we get in the same way that
    \[
    \emptyset^{(i)}_{m_{i},\dots,m_{1}} \cap [0,m_{i+1}] = \emptyset^{(i)} \cap [0,m_{i+1}]
    .\] 
    Now consider
    \[
    g:= \sum_{i=0}^k T^i(g_i)
    .\]
    By definition we have that $\mu_{k-i}(g) = \mu_{k-i}(f+g) = m_{k-i}$. Therefore, the right hand side of \eqref{eq:vsg} for $f+g$ is equal $\emptyset^{(i)}$ for $[0;n_j] \subseteq [0;\mu(f)]$. With this the claim is satisfied.
  \end{claimproof}

  \begin{claim}{2}
    $\textsf{SG}_i(f)$ is even for each $f\in \llangle B \rrangle$.
  \end{claim}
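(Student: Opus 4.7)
The plan is to show that, given any $f \in \llangle B \rrangle$ and $i \in [1,k]$, one can apply Claim~1 to obtain $g \in \langle B \rangle$ with $f < g$ for which the very-short-gap count in $f+g$ decomposes cleanly as $\textsf{VSG}_i(f+g) = \textsf{SG}_i(f) + \textsf{VSG}_i(g)$; homogeneity of $B$ for the coloring $c$ then forces $\textsf{SG}_i(f)$ to be even.

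To set this up, fix $i \in [1,k]$ and $f \in \llangle B \rrangle$, and invoke Claim~1 to produce $g \in \langle B \rangle$ with $f < g$. The key features of such a $g$ are: (a) on gaps internal to $f$, the $\textsf{VSG}_i$-condition computed in $f+g$ matches the true $\textsf{SG}_i$-condition, since the construction via the limit lemma (as in the proof of Claim~1) makes the approximations $\emptyset^{(i)}_{\mu_i(f+g),\dots,\mu_1(f+g)}$ agree with $\emptyset^{(i)}$ on $[0;\mu(f)]$; and (b) by taking $\lambda_i(g)$ past the stabilization point of $\emptyset^{(i)} \cap [0;\mu_i(f)]$, the boundary gap between $f$'s last $i$-position and $g$'s first $i$-position is neither short nor very short. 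I also need that $f + g \in \langle B \rangle$: writing $f = T^j(h)$ for some $h \in \langle B \rangle$ and $j \in [0,k]$, the sum $f+g$ has the form $\sum_n T^{k-m(n)}(b_n)$ with $m$ attaining the value $k+1$ on $g$'s support, hence $m \in \fin[k+1]$ and $f+g \in \langle B \rangle$.

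With this in place, I would partition the gaps$_i$ of $f + g$ into three classes: gaps internal to $f$, the boundary gap (if any), and gaps internal to $g$. By (a), the first class contributes exactly $\textsf{SG}_i(f)$ to $\textsf{VSG}_i(f+g)$; by (b), the boundary contributes $0$; and since $\mu_j(f+g) = \mu_j(g)$ for $j \le i$ (the relevant maxima live in $g$'s support), the third class contributes exactly $\textsf{VSG}_i(g)$. Homogeneity of $B$ for $c$ then gives $\textsf{VSG}_i(f+g) \equiv \textsf{VSG}_i(g) \pmod 2$, and the decomposition forces $\textsf{SG}_i(f) \equiv 0 \pmod 2$.

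The main obstacle is verifying the decomposition carefully, particularly the claim in (a) that the very-short-gap approximation in $f+g$ really coincides with the true short-gap condition on $f$'s portion in both directions of inclusion, and arranging that the boundary contribution vanishes outright rather than merely matching its parity on both sides of the equation.
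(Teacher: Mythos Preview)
Your proposal is correct and follows essentially the same approach as the paper: apply Claim~1 to obtain $g$, establish the identity $\textsf{VSG}_i(f+g) = \textsf{SG}_i(f) + \textsf{VSG}_i(g)$, and conclude by homogeneity that $\textsf{SG}_i(f)$ is even. The paper's own proof is very terse (it simply asserts the identity and that $f+g,g\in\langle B\rangle$), whereas you spell out the three-class partition of gaps and the reason $f+g\in\langle B\rangle$; the concern you flag about the bidirectional match between $\textsf{SG}_i$ and $\textsf{VSG}_i$ on $f$'s portion is exactly what the construction in Claim~1 (via the $m_j=\mu_j(g)$) is designed to guarantee.
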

  \begin{claimproof}{2}
    Assume that $f\in \llangle B \rrangle$ and take again $g$ as in Claim~1. We get
    \begin{equation*}
      \textsf{VSG}_i(f+g) = \textsf{SG}_i(f) + \textsf{VSG}_i(g)
      .
    \end{equation*}
    Since $f+g,g\in \langle B \rangle$, the parity of $\textsf{VSG}_i(f+g),  \textsf{VSG}_i(g)$ is the same by assumption to $B$. Therefore, $\textsf{SG}_i(f)$ must be even.
  \end{claimproof}

  We now show by induction that one can compute $\emptyset^{(i)}$ for $i\le k$ from $B$. Assume that we already have an algorithm which computes $\emptyset^{(i-1)}$. To compute whether $x$ is contained in $\emptyset^{(i)}$ or not search for an $f,g\in \langle B \rangle$ with
  \begin{enumerate}
  \item $f<g$,
  \item $x <\lambda(f)$, and
  \item\label{enum:2:3} the image of $f,g$ contains $i$ (this can always be achieved by searching for $f_1<f_2\in \langle B \rangle$ and taking $f:= f_1 + T^{k+1-i}(f_2)$).
  \end{enumerate}
  By Claim~2, we know that $\textsf{SG}_i(f),\textsf{SG}_i(g), \textsf{SG}_i(f+g)$ is even. Thus, $(\mu_i(f),\lambda_i(g))$ is not a short gap$_i$. (For this argument we use \ref{enum:2:3} and the consequence that $\mu_i(f),\lambda_i(g)$ are defined.)
  Therefore,
  \[
  x\in \emptyset^{(i)} \quad\text{if{f}}\quad
  x\in {\big(\emptyset^{(i-1)}\big)}'_{\lambda_i(g)}
  \]
  By induction hypothesis $\emptyset^{(i-1)}$ is computable relative to $B$. Therefore $x\in \emptyset^{(i)}$ is computable in $B$, too.
\end{proof}

\begin{remark}\label{rem:corl}
  The above proof formalizes in $I\Sigma^0_{\mathnormal{k+2}}$. The critical steps where induction is used are Claim~1, and the verification of the algorithm. In Claim~1, $B\Sigma^0_2$ relative to $\emptyset^{(i)}$ for $i<k$ is used to find the $m$. The analysis of this the same as in the limit lemma and it is equivalent to $B\Sigma_2$. In the verification of the algorithm we have to show that, calling the algorithm for the $i$-Turing jump $e_i$, that
  \[
  \Forall{x} \left(\Phi_{e_i}^B(x)= 1 \IFF x\in \emptyset^{(i)}\right)
  \]
  for all $i\le k$. Since this statement is $\Pi^0_{k+2}$, $I\Sigma^0_{k+2}$ is sufficient.
\end{remark}

\begin{corollary}\mbox{}
  \begin{enumerate}
  \item\label{enum:corl:1} For all $k$ we have that
    $\ls{RCA_0} + \lp{FIN_\mathnormal{k}} + \lp[\Sigma_\mathnormal{k+2}]{IND}$ proves that $\emptyset^{(k)}$ exists.
  \item\label{enum:corl:2} $\ls{ACA_0} + \lp[\Delta^1_1]{IND} + \lp{FIN_{<\infty}} \vdash \Forall{X}\Forall{k} X^{(k)} \text{ exists}$. In other words, the above theory proves \ls{ACA_0'}.
  \item\label{enum:corl:3} $\ls{ACA_0} \nvdash \lp{FIN_{<\infty}}$.
  \end{enumerate}
\end{corollary}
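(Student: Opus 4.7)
The plan is to treat the three items in sequence: (i) is immediate from Theorem~\ref{thm:low}, (ii) iterates (i) under the additional $\lp[\Delta^1_1]{IND}$, and (iii) follows by a standard separation.

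For (i), I would apply the recursive $2^k$-coloring of $\fin[k+1]$ from Theorem~\ref{thm:low}, invoke $\lp{FIN_\mathnormal{k+1}}$ (reading the corollary's index as one shift from what the theorem produces) to pick a monochromatic combinatorial subspace $B$, and then run the algorithm from the proof of Theorem~\ref{thm:low} on $B$ to compute $\emptyset^{(k)}$. Remark~\ref{rem:corl} already certifies that this extraction formalizes in $I\Sigma^0_{k+2}$, exactly the induction hypothesis of (i).

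For (ii), I would work inside $\ls{ACA_0} + \lp[\Delta^1_1]{IND} + \lp{FIN_{<\infty}}$, fix $X$, and prove $\Forall{k}(X^{(k)} \text{ exists})$ by induction on $k$. The inductive step uses Theorem~\ref{thm:low} relativized to $X$: there is an $X$-recursive coloring of $\fin[k+1]$ whose monochromatic subspaces all compute $X^{(k)}$, and $\lp{FIN_{<\infty}}$ supplies such a subspace. The predicate ``$X^{(k)}$ exists'' is naturally $\Sigma^1_1$, but uniqueness of the iterated jump provides an equivalent $\Pi^1_1$ form over $\ls{ACA_0}$, so it is $\Delta^1_1$ and the step is legalized by $\lp[\Delta^1_1]{IND}$. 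The conclusion is exactly the axiom of $\ls{ACA_0'}$.

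For (iii), I would argue by contradiction: if $\ls{ACA_0} \vdash \lp{FIN_{<\infty}}$, then (ii) yields $\ls{ACA_0} + \lp[\Delta^1_1]{IND} \vdash \ls{ACA_0'}$, contradicting the well-known separation of these theories (for instance, $\ls{ACA_0'}$ proves $\mathrm{Con}(\ls{ACA_0})$, which $\ls{ACA_0} + \lp[\Delta^1_1]{IND}$ does not). The hardest step is the bookkeeping in (ii): verifying that ``$X^{(k)}$ exists'' has a $\Delta^1_1$ formulation uniformly in $k$, rather than drifting into a strict $\Sigma^1_1$ form that would exceed the inductive axiom available.
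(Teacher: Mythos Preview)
Your (i) coincides with the paper's: both simply invoke Remark~\ref{rem:corl}.

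In (ii) there is a real gap, and it is exactly the step you flagged as hardest. You assert that ``$X^{(k)}$ exists'' is $\Delta^1_1$ because uniqueness of the iterated jump furnishes a $\Pi^1_1$ equivalent. But uniqueness only says that any two jump sequences of length $k{+}1$ over $X$ agree; the corresponding $\Pi^1_1$ formula (``for every jump sequence $Y$ of length $k{+}1$ over $X$, \dots'') is vacuously satisfied when no such $Y$ exists, so it is \emph{not} provably equivalent to the $\Sigma^1_1$ existence claim unless one already knows $X^{(k)}$ exists---which is precisely what the induction is supposed to establish. Your induction therefore really needs $\Sigma^1_1$\nobreakdash-induction, not $\lp[\Delta^1_1]{IND}$. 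The paper takes a different route: it observes that $\lp[\Delta^1_1]{IND}$ yields $\lp[\Sigma_\mathnormal{n}]{IND}$ uniformly in $n$ (arithmetical formulas are trivially both $\Sigma^1_1$ and $\Pi^1_1$), and then runs the internal inductions of Theorem~\ref{thm:low} and Remark~\ref{rem:corl}---which are on \emph{arithmetical} predicates such as ``the output of $e_{i}$ on $B$ is the jump of the output of $e_{i-1}$''---uniformly in $k$. The ``Skolemization'' is exactly this replacement of the bare $\Sigma^1_1$ statement ``$\emptyset^{(i)}$ exists'' by the arithmetical verification that the explicit algorithm of Theorem~\ref{thm:low} produces it.

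Your (iii) also diverges from the paper. You separate $\ls{ACA_0}+\lp[\Delta^1_1]{IND}$ from $\ls{ACA_0'}$ via $\mathrm{Con}(\ls{ACA_0})$; the paper instead writes $\Forall{X}\Forall{k}\,X^{(k)}\text{ exists}$ as a single $\Pi^1_2$ sentence and invokes the $\Pi^1_2$\nobreakdash-conservativity of $\ls{ACA_0}+\lp[\Delta^1_1]{CA}$ over $\ls{ACA_0}$ (Simpson IX.4.4), obtaining the contradiction $\ls{ACA_0}\vdash\ls{ACA_0'}$ directly. Your route can be completed, but the conservativity argument is shorter and does not require separately establishing that $\ls{ACA_0'}\vdash\mathrm{Con}(\ls{ACA_0})$.
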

\begin{proof}
  \ref{enum:corl:1} is just a reformulation of \prettyref{rem:corl}. \ref{enum:corl:2} follows from \ref{enum:corl:1} by noting that \lp[\Delta^1_1]{IND} implies \lp[\Sigma_\mathnormal{n}]{IND} uniformly for each $n$ using Skolemization.

  \noindent
  \ref{enum:corl:3} follows from the following. First, $\Forall{X,k} X^{(k)} \text{ exists}$ can be written as a $\Pi^1_2$\nobreakdash-\hspace{0pt}statement, i.e.,
  \[
  \Forall{X,k} \Exists{Y} \left( Y_0 = X \AND \Forall{i< k} Y_{i+1} = \text{TJ}(Y_{i})\right)
  .\]
  Now if $\lp{FIN_{<\infty}}$ would be provable in \ls{ACA_0} then the above statement would be provable already in $\ls{ACA_0} + \lp[\Delta^1_1]{IND}$ and a fortiori in $\ls{ACA_0} + \lp[\Delta^1_1]{CA}$. However, this theory is $\Pi^1_2$-conservative over \ls{ACA_0}, see \cite[IX.4.4]{sS09}, which leads to the contradiction $\ls{ACA_0} \vdash \ls{ACA_0'}$.
\end{proof}

\section{Conclusion}

We could show that the generalization of Hindman's theorem (\lp{HT}), Gowers' $\fin$ theorem ($\lp{FIN_{<\infty}}$) is stronger than the best known lower bound for \lp{HT}.
It remains open to find a matching upper bound for  $\lp{FIN_{<\infty}}$. It seems to be in general very difficult since to the knowledge of the author any known proofs of  $\lp{FIN_{<\infty}}$ makes use of special ultrafilters (or similar objects). Of course by Shoenfield absoluteness  $\lp{FIN_{<\infty}}$ must be provable without the axiom of choice.

\bibliographystyle{amsplain}
\bibliography{../bib}

\providecommand{\bysame}{\leavevmode\hbox to3em{\hrulefill}\thinspace}
\providecommand{\MR}{\relax\ifhmode\unskip\space\fi MR }
% \MRhref is called by the amsart/book/proc definition of \MR.
\providecommand{\MRhref}[2]{%
  \href{http://www.ams.org/mathscinet-getitem?mr=#1}{#2}
}
\providecommand{\href}[2]{#2}
\begin{thebibliography}{1}

\bibitem{aB05}
Andreas Blass, \emph{Some questions arising from {H}indman's theorem}, Sci.
  Math. Jpn. \textbf{62} (2005), no.~2, 331--334. \MR{2179959}

\bibitem{BHS87}
Andreas~R. Blass, Jeffry~L. Hirst, and Stephen~G. Simpson, \emph{Logical
  analysis of some theorems of combinatorics and topological dynamics}, Logic
  and combinatorics ({A}rcata, {C}alif., 1985), Contemp. Math., vol.~65, Amer.
  Math. Soc., Providence, RI, 1987, pp.~125--156. \MR{891245}

\bibitem{wG92}
W.~T. Gowers, \emph{Lipschitz functions on classical spaces}, European J.
  Combin. \textbf{13} (1992), no.~3, 141--151. \MR{1164759}

\bibitem{nH74}
Neil Hindman, \emph{Finite sums from sequences within cells of a partition of
  {$N$}}, J. Combinatorial Theory Ser. A \textbf{17} (1974), 1--11.
  \MR{0349574}

\bibitem{jH04}
Jeffry~L. Hirst, \emph{Hindman's theorem, ultrafilters, and reverse
  mathematics}, J. Symbolic Logic \textbf{69} (2004), no.~1, 65--72.
  \MR{2039345}

\bibitem{aM11}
Antonio Montalb{\'a}n, \emph{Open questions in reverse mathematics}, Bull.
  Symbolic Logic \textbf{17} (2011), no.~3, 431--454. \MR{2856080}

\bibitem{sS09}
Stephen~G. Simpson, \emph{Subsystems of second order arithmetic}, second ed.,
  Perspectives in Logic, Cambridge University Press, Cambridge, 2009.
  \MR{2517689}

\bibitem{hT12}
Henry Towsner, \emph{A simple proof and some difficult examples for {H}indman's
  theorem}, Notre Dame J. Form. Log. \textbf{53} (2012), no.~1, 53--65.
  \MR{2925268}

\end{thebibliography}

\end{document}